\newtheorem{proposition}{Proposition}[section]
\newtheorem{theoreme}[proposition]{Theorem}
\newtheorem{theorem}[proposition]{Th\'eor\`eme}
\newtheorem{remarque}[proposition]{Remark}
\newtheorem{notation}[proposition]{Notation}
\begin{document}
\title{Lipschitz stability estimate in the inverse Robin problem for the Stokes system}
\author{Anne-Claire Egloffe}
\maketitle
\textbf{Abstract}
\par
We are interested in the inverse problem of recovering a Robin coefficient  defined on some non accessible part of the boundary from available data on another part of the boundary 
in the nonstationary Stokes system. We prove a Lipschitz stability estimate under the \textit{a priori} assumption that the Robin coefficient lives in some compact and convex subset
of a finite  dimensional vectorial subspace of the set of continuous functions. To do so, we use a theorem  proved by L. Bourgeois  which establishes Lipschitz stability 
estimates for a class of inverse problems in an abstract framework.

\vspace*{5mm}

\par
\textbf{R\'esum\'e} \textit{Estimation de stabilit\'e Lipschitzienne pour le syst\`eme de Stokes avec des conditions aux limites de types Robin}
\par
Nous nous int\'eressons \`a l'identification d'un coefficient de Robin d\'efini sur une partie non accessible du bord \`a partir de mesures  disponibles sur une autre partie du bord dans le syst\`eme de Stokes non stationnaire. Nous prouvons une estimation de stabilit\'e Lipschitzienne sous l'hypoth\`ese \textit{a priori} que le coefficient de  Robin est d\'efini dans un sous-ensemble compact et convexe d'un sous-espace vectoriel de dimension finie de l'espace des fonctions continues. Pour ce faire, nous utilisons un th\'eor\`eme  prouv\'e par L. Bourgeois permettant d'\'etablir des in\'egalit\'es de stabilit\'e Lipschitzienne pour une classe de probl\`emes inverses dans un cadre abstrait. 

\section{Version fran\c caise abr\'eg\'ee}
%
\par
Soit $T>0$, $\Omega \subset \mathbb{R}^d$, avec $d \in \mathbb{N}^*$, un ouvert  born\'e Lipschitzien et  connexe tel que $\partial \Omega= \Gamma_l \cup \Gamma_{0} \cup \Gamma_{out}$
avec  $\displaystyle \Gamma_{out}=\bigcup_{i=1}^N \Gamma_i$  et $\nu$ est la normale ext\'erieure \`a  $\Omega$.
On consid\`ere le syst\`eme de Stokes suivant : 
\begin{equation} \label{eqStokes}
	\left \{   
		\begin{array}{ccll}
                  \partial_t u    - \Delta u + \nabla p							&	=	&	0,							& \textrm{ dans } (0,T)\times \Omega, \\ 
                                          div \textrm{ } u 							&	=	&	0,							& \textrm{ dans } (0,T)\times \Omega, \\  
                                          u										&	=	&	0,							& \textrm{ dans } (0,T)\times\Gamma_l, \\
                         \displaystyle       \partial_{\nu} u- p \nu			&	=	&	g, 							& \textrm{ sur  } (0,T)\times \Gamma_0, \\   
                         \displaystyle       \partial_{\nu} u- p \nu	+ qu 	&	=	&	0,							& \textrm{ sur  } (0,T)\times \Gamma_{out},  \\
                         						u(0,.)						&	=	&	u_0,							& \textrm{ dans } \Omega.
                   \end{array}     
         \right. 
 \end{equation}
%
%
							
%
%
%
Le probl\`eme inverse qui nous int\'eresse est le suivant : on cherche \`a identifier le coefficient de Robin $q$ d\'efini sur la partie non accessible du bord $ \Gamma_{out}$ 
\`a partir de mesures
disponibles sur $ \Gamma_0$ pour $(u,p)$ solution du syst\`eme \eqref{eqStokes}. 
%
%
Ce type de probl\`eme inverse appara\^it naturellement dans la mod\'elisation d'\'ecoulements biologiques, comme par exemple l'\'ecoulement sanguin dans le syst\`eme cardiovasculaire 
(voir~\cite{quarteroni} et~\cite{irene}) ou encore l'\'ecoulement de l'air dans les poumons (voir~\cite{Baffico}). Nous renvoyons \`a \cite{egloffe_these} pour une introduction \`a la mod\'elisation 
de l'\'ecoulement de l'air dans les poumons et aux diff\'erentes  conditions aux limites qui peuvent \^etre prescrites. 
%
La r\'esolution de ce type de probl\`eme inverse dans le cas stationnaire a d\'ej\`a  \'et\'e \'etudi\'ee dans~\cite{us}, ~\cite{us2} et~\cite{rr}. Dans les deux premiers travaux, une in\'egalit\'e de 
stabilit\'e logarithmique est obtenue alors qu'une in\'egalit\'e de stabilit\'e Lipschitzienne est \'etablie dans~\cite{rr} sous l'hypoth\`ese \textit{ a priori } que le coefficient de Robin est constant
par morceaux sur $\Gamma_{out}$. Dans chacun de ces papiers, les mesures intervenant dans les in\'egalit\'es de stabilit\'e sont la vitesse $u$, la pression $p$ et la d\'eriv\'ee normale de la pression $\displaystyle \frac{\partial p }{\partial n}$ sur $\Gamma \subseteq  \Gamma_0$. Le cas du syst\`eme de Stokes non stationnaire  a \'et\'e abord\'e dans~\cite{us} dans le cas particulier o\`u le 
coefficient de Robin ne d\'epend pas du temps. L'id\'ee, introduite dans~\cite{bellassoued_cheng_choulli} dans le cas de l'\'equation de Laplace, consiste \`a \'etendre l'in\'egalit\'e de stabilit\'e valable pour le probl\`eme stationnaire au probl\`eme non stationnaire en utilisant une  in\'egalit\'e provenant de la th\'eorie des semigroupes analytiques. Cela conduit \`a faire des mesures en temps infini.
 \par
L'originalit\'e de l'in\'egalit\'e de stabilit\'e Lipschitzienne pr\'esent\'ee dans cette Note est multiple : d'une part, nous obtenons une in\'egalit\'e de stabilit\'e valable pour le syst\`eme de 
Stokes non stationnaire en temps fini avec un coefficient de Robin d\'ependant du temps et d'autre part, l'unique mesure intervenant dans l'in\'egalit\'e de stablit\'e est la vitesse 
$u$ sur $(0,T) \times\Gamma$, avec $\Gamma \subseteq  \Gamma_0$. De plus, l'ensemble des coefficients de Robin pour lequel l'in\'egalit\'e de stabilit\'e Lipschitzienne est valide est un peu plus 
g\'en\'eral que dans~\cite{rr} : les coefficients de Robin ne sont plus n\'ecessairement constants par morceaux mais appartiennent \`a un sous-ensemble compact et convexe d'un sous-espace vectoriel de 
dimension finie de l'ensemble des fonctions continues. Enfin, nous avons besoin d'hypoth\`eses de r\'egularit\'e moins fortes sur le bord du domaine $\Omega$ et sur le flux $g$. 
\par
Afin d'\^etre plus pr\'ecis, nous introduisons quelques notations.
\begin{notation}
On note 
\begin{equation*}
L^{\infty}_+((0,T)\times \Gamma_{out})=\{q \in L^{\infty}((0,T)\times \Gamma_{out}) \textrm{; } \exists m>0 \textrm{, } q \geq m \textrm{ p. p. sur }(0,T) \times\Gamma_{out}  \},
\end{equation*}
et
\begin{equation*}
\mathcal{C}^0(0,T; \mathcal{C}^0_{pc}(\Gamma_{out}))=\{q :(0,T)\times \Gamma_{out} \to \mathbb{R} \textrm{; } q\vert_{(0,T)\times\Gamma_i} \in \mathcal{C}^0((0,T)\times \Gamma_{i})  \textrm{ pour } 1 \leq i \leq N \}.
\end{equation*}
\end{notation}
Le r\'esultat principal de cette Note est r\'esum\'e dans le th\'eor\`eme suivant :
\begin{theorem} \label{main_result_fr}
Soit $M \in \mathbb{N}^*$. On consid\`ere $V_M$ un sous-espace vectoriel de $\mathcal{C}^0(0,T; \mathcal{C}^0_{pc}(\Gamma_{out}))$ engendr\'e par $M$ fonctions lin\'eairement ind\'ependantes 
et $K_M$ un sous-espace convexe et compact de  $V_M \cap L^{\infty}_+((0,T)\times \Gamma_{out})$. Soit $\Gamma \subseteq \Gamma_0$ une partie ouverte non vide du bord du domaine,  $u_0 \in H^1(\Omega)$ 
tel que $div$ $u_0=0$ dans $\Omega$ et $g \in H^1(0,T;L^2(\Gamma_0))$  tel que  $g(t)$ est non identiquement z\'ero pour tout $t \in (0,T)$.  Soit $(u_k,p_k)$ la solution faible du syst\`eme~\eqref{eqStokes} avec $q=q_k \in K_M$ pour $k=1,2$. 
Alors, il existe une constante $C>0$ telle que
\begin{equation*}
\|q_1-q_2\|_{L^{\infty}((0,T) \times \Gamma_{out} )} \leq C \|u_1-u_2\|_{L^2((0,T)Ê\times \Gamma)}.
\end{equation*}
\end{theorem}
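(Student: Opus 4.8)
The plan is to recast the inverse problem in terms of a single nonlinear measurement map and then to invoke the abstract Lipschitz-stability theorem of Bourgeois. I would introduce
\[
\Phi : K_M \to L^2((0,T)\times\Gamma), \qquad \Phi(q) = u_q|_{(0,T)\times\Gamma},
\]
where $(u_q,p_q)$ denotes the weak solution of \eqref{eqStokes} associated with the Robin coefficient $q$. Bourgeois's theorem asserts that when $K_M$ is a compact convex subset of a finite-dimensional space and $\Phi$ extends to a map of class $C^1$ on a neighborhood of $K_M$ that is (i) injective on $K_M$ and (ii) has injective differential $\Phi'(q)$ at every $q\in K_M$, then the sought estimate $\|q_1-q_2\|\le C\|\Phi(q_1)-\Phi(q_2)\|$ holds. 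The whole proof therefore reduces to establishing well-posedness and regularity of $\Phi$, its $C^1$ character, and the two injectivity statements.

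First I would settle well-posedness: for $q\in K_M\subset L^\infty_+((0,T)\times\Gamma_{out})$ the system \eqref{eqStokes} admits a unique weak solution with enough regularity that the trace $u_q|_{(0,T)\times\Gamma}$ is well-defined in $L^2$, the coercivity coming from the sign condition $q\ge m>0$ together with the regularity of $u_0$ and $g$. For differentiability I would differentiate the weak formulation with respect to $q$: the directional derivative $w=\Phi'(q)h$, with associated pressure $\pi$, solves the linearized system
\[
\partial_t w-\Delta w+\nabla\pi=0,\qquad \operatorname{div} w=0 \quad\text{in } (0,T)\times\Omega,
\]
with $w=0$ on $\Gamma_l$, $\partial_\nu w-\pi\nu=0$ on $\Gamma_0$, $\partial_\nu w-\pi\nu+qw=-h\,u_q$ on $\Gamma_{out}$, and $w(0,\cdot)=0$. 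The bounds needed to justify differentiability and continuity of $q\mapsto\Phi'(q)$ follow from energy estimates applied to the difference $u_{q+h}-u_q$ and to $w$.

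For injectivity of $\Phi$, suppose $u_1=u_2$ on $(0,T)\times\Gamma$. The difference $(u,p)=(u_1-u_2,p_1-p_2)$ solves the homogeneous Stokes equations with $u=0$ on $\Gamma$; moreover, since the prescribed stress $\partial_\nu u_k-p_k\nu=g$ is the same on $\Gamma_0\supseteq\Gamma$ for both solutions, we also have $\partial_\nu u-p\nu=0$ on $\Gamma$. Both Cauchy data of $(u,p)$ thus vanish on $(0,T)\times\Gamma$, and the unique continuation property for the nonstationary Stokes system forces $u\equiv0$ in $(0,T)\times\Omega$ (and, using the vanishing stress on $\Gamma$, $p\equiv0$ as well). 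Inserting this into the Robin conditions on $\Gamma_{out}$ yields $(q_1-q_2)\,u_2=0$ there. The hypothesis that $g(t)$ is non identically zero enters here: it guarantees $u_2\not\equiv0$ and, again by unique continuation, that $u_2$ cannot vanish on any space-time-open part of $\Gamma_{out}$; combined with the continuity and finite dimensionality built into $V_M$, this forces $q_1=q_2$. The injectivity of $\Phi'(q)$ is obtained by the identical scheme applied to the linearized system: $w=0$ on $\Gamma$ together with the homogeneous stress condition gives vanishing Cauchy data, hence $w\equiv0$, hence $h\,u_q=0$ on $\Gamma_{out}$, and the non-vanishing of $u_q$ gives $h=0$.

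The main obstacle is the unique continuation property for the time-dependent Stokes system from Cauchy data on the lateral subboundary $\Gamma$, which underlies both injectivity steps as well as the non-degeneracy of $u_q$ on $\Gamma_{out}$; verifying that the derivative $\Phi'(q)$ depends continuously on $q$ (so that $\Phi$ is genuinely $C^1$ on a neighborhood of $K_M$) is the secondary technical point. The finite dimensionality of $V_M$, the convexity and compactness of $K_M$, and the sign condition $q\ge m$ are precisely what convert the qualitative uniqueness furnished by unique continuation into the quantitative Lipschitz bound, through the compactness argument at the heart of Bourgeois's theorem.
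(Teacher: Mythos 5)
Your proposal follows essentially the same route as the paper: recasting the problem as the measurement map $q \mapsto u_q|_{(0,T)\times\Gamma}$, invoking Bourgeois's abstract theorem, proving Fr\'echet differentiability and continuity of the derivative via energy estimates for the linearized system with boundary data $-h\,u_q$ on $\Gamma_{out}$, and establishing both injectivity statements through the Fabre--Lebeau unique continuation property together with the contradiction argument exploiting $g(t)\not\equiv 0$. The proposal is correct and matches the paper's proof in structure and in all key steps.
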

La preuve du Th\'eor\`eme \ref{main_result_fr} est bas\'ee sur un th\'eor\`eme abstrait prouv\'e par L. Bourgeois dans~\cite{boubou} que nous rappelons dans le Th\'eor\`eme \ref{abstract_th} et repose sur  le fait que l'application 
\begin{equation*}
\begin{array}{rcl}
T :  L^{\infty}_+((0,T)\times \Gamma_{out}) & \to &L^2((0,T)\times\Gamma) \\
  q & \to & u\vert_{\Gamma}
\end{array}
\end{equation*}
o\`u $(u,p)$ est solution du syst\`eme~\eqref{eqStokes} et avec $\Gamma \subseteq \Gamma_0$, est injective, de classe $\mathcal{C}^1$ et sa d\'eriv\'ee est \'egalement
injective. Notons que le r\'esultat \'enonc\'e dans \cite{boubou} permet d'\'etablir des in\'egalit\'es de stabilit\'e Lipschitziennes pour une classe de probl\`emes inverses. Il permet notamment de retrouver les r\'esultats de stabilit\'e
d\'evelopp\'es
dans \cite{sincich} et \cite{alessandrini_et_al} sans avoir recours \`a des arguments de quantification de r\'esultats de continuation unique. L'auteur pr\'ecise que l'on peut trouver l'id\'ee originale d\'evelopp\'ee dans \cite{conca} dans le cas particulier de la d\'etection d'un obstacle se d\'epla\c cant dans un fluide \`a partir de mesures disponibles sur le bord du domaine. De plus, des th\'eor\`emes abstraits du m\^eme  type mais avec des hypoth\`eses diff\'erentes peuvent \^etre trouv\'es dans \cite{stefanov}.
%
%
%
\section{Introduction}
Let $T>0$, $\Omega \subset \mathbb{R}^d$, with $d \in \mathbb{N}^*$, be a Lipschitz   bounded connected   open  set such that $\partial \Omega= \Gamma_l \cup \Gamma_{0} \cup \Gamma_{out}$ and 
$\displaystyle \Gamma_{out}= \bigcup_{i=1}^N \Gamma_i$. We are interested in the inverse problem of identifying the  
Robin coefficient $q$ defined on some non accessible part of the boundary $\Gamma_{out}$ from   available data on $ \Gamma_0$ for $(u,p)$ solution of the Stokes system~\eqref{eqStokes}.   
%
%
Such kinds of systems naturally appear in the modeling of biological problems like, for example, blood flow in the cardiovascular system (see~\cite{quarteroni} and~\cite{irene}) or airflow in the lungs  (see~\cite{Baffico}). For an introduction on the modeling of the airflow in the lungs and on different boundary conditions which may be prescribed, we refer to \cite{egloffe_these}. 
%
%
Similar inverse  problems  have already been studied in the stationary case in~\cite{us}, ~\cite{us2} and~\cite{rr}. In~\cite{us} and~\cite{us2}, a logarithmic stability estimate is obtained, whearas a Lipschitz stability estimate is established in~\cite{rr} under the \textit{ a priori } assumption that the  Robin coefficient is piecewise constant on $\Gamma_{out}$. In each cases, the measurements involved in the stability estimates are the velocity $u$, the pressure $p$ and the normal derivative of the pressure $\displaystyle \frac{\partial p }{\partial n}$ on $\Gamma \subseteq \Gamma_0$. The case of the nonstationary Stokes system has been addressed in~\cite{us} in the particular case where the Robin coefficient does not depend  time. The idea, introduced in~\cite{bellassoued_cheng_choulli} in the case of the Laplace equation,  is to extend the stability estimate valid for the stationary problem to the nonstationary problem by using an inequality from the theory of analytic semigroups. This leads to infinite time measurements.
\par
The originality of the Lipschitz stability estimate presented in this Note is multiple: on the one hand, we obtain a stability estimate valid for the nonstationary Stokes system in finite time with a time-dependent Robin coefficient  and secondly, the only measurement  involved in the stability estimate is the velocity $u$ on $(0,T)\times\Gamma $, with $\Gamma \subseteq \Gamma_0$. In addition, the set of admissible Robin coefficients is more general than in~\cite{rr}: Robin coefficients are not necessarily piecewise constant but belong to some compact and convex subset
of a finite  dimensional vectorial subspace of the set of continuous functions. Finally, we relax the  regularity assumptions needed both on the boundary of the domain $\Omega$ and on the flux $g$. 
\par
To be more precise, we introduce some notations.
\begin{notation}
We denote by
\begin{equation*}
L^{\infty}_+((0,T)\times \Gamma_{out})=\{q \in L^{\infty}((0,T)\times \Gamma_{out})\textrm{; }\exists m>0 \textrm{, } q \geq m \textrm{ a. e. on }(0,T) \times\Gamma_{out}  \},
\end{equation*}
and
\begin{equation*}
\mathcal{C}^0(0,T; \mathcal{C}^0_{pc}(\Gamma_{out}))=\{q:(0,T)\times \Gamma_{out} \to \mathbb{R}\textrm{; } q\vert_{(0,T)\times\Gamma_i} \in \mathcal{C}^0((0,T)\times \Gamma_{i})  \textrm{ for } 1 \leq i \leq N \}.
\end{equation*}
\end{notation}
\par
The main result of this Note is summarized in the following theorem.
\begin{theoreme} \label{main_result}
Let  $M \in \mathbb{N}^*$.  Let $V_M$ be a subspace of $\mathcal{C}^0(0,T; \mathcal{C}^0_{pc}(\Gamma_{out}))$ spanned by some $M$ linearly independent functions and $K_M$ be any compact and convex subset of $V_M \cap L^{\infty}_+((0,T)\times \Gamma_{out})$.
Let $\Gamma \subseteq \Gamma_0$ be a nonempty open subset of the boundary, $u_0 \in H^1(\Omega)$ be such that $div$ $u_0=0$ in $\Omega$ and $g \in H^1(0,T;L^2(\Gamma_0))$ be such that $g(t)$ is not identically zero for all $t \in (0,T)$.  Let  $(u_k,p_k)$  be the weak solutions of system~\eqref{eqStokes} with $q=q_k \in K_M$ for $k=1,2$. Then, there exists a constant $C>0$ such that 
\begin{equation*}
\|q_1-q_2\|_{L^{\infty}((0,T) \times \Gamma_{out} )} \leq C \|u_1-u_2\|_{L^2((0,T)Ê\times \Gamma)}
\end{equation*}
\end{theoreme}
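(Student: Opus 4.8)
The plan is to verify the three hypotheses of the abstract result of L.~Bourgeois recalled in Theorem~\ref{abstract_th} for the measurement map
\begin{equation*}
T : V_M \cap L^{\infty}_+((0,T)\times\Gamma_{out}) \to L^2((0,T)\times\Gamma), \qquad q \mapsto u\vert_{(0,T)\times\Gamma},
\end{equation*}
where $(u,p)$ is the weak solution of~\eqref{eqStokes} associated with $q$. Since $K_M$ is a compact convex subset of the finite-dimensional space $V_M$, it suffices to prove that (i) $T$ is of class $\mathcal{C}^1$ on a neighborhood of $K_M$, (ii) $T$ is injective on $K_M$, and (iii) its differential $DT(q)$ is injective at every $q\in K_M$. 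The Lipschitz estimate with a single constant $C$ valid on all of $K_M$ is then exactly the output of Theorem~\ref{abstract_th}, the $L^\infty$ norm appearing on the left-hand side being equivalent to any norm on the finite-dimensional space $V_M$.

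For (i), I would first record the well-posedness of~\eqref{eqStokes}: the lower bound $q\ge m>0$ on $\Gamma_{out}$ supplies the coercivity needed to solve the variational formulation, so that $q\mapsto(u,p)$ is well defined with values in an energy space whose trace on $\Gamma$ is controlled in $L^2((0,T)\times\Gamma)$. The dependence of the weak formulation on $q$ enters only through the boundary bilinear form $\int_{\Gamma_{out}} q\,u\cdot v$, which is affine in $q$; an application of the implicit function theorem (or a direct difference estimate) then yields that $q\mapsto(u,p)$, hence $T$, is $\mathcal{C}^1$, and that the derivative $w=DT(q)h$ in a direction $h\in V_M$ is the trace $w\vert_\Gamma$ of the solution $(w,\pi)$ of the linearized system
\begin{equation*}
\partial_t w - \Delta w + \nabla\pi = 0, \quad \mathrm{div}\, w = 0 \text{ in } \Omega, \qquad w = 0 \text{ on } \Gamma_l, \qquad \partial_\nu w - \pi\nu = 0 \text{ on } \Gamma_0,
\end{equation*}
together with $\partial_\nu w - \pi\nu + q w = -h\,u$ on $\Gamma_{out}$ and $w(0,\cdot)=0$.

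The heart of the argument is (ii) and (iii), both resting on a unique continuation property for the nonstationary Stokes system. For injectivity of $T$, set $(u,p)=(u_1-u_2,p_1-p_2)$ when $T(q_1)=T(q_2)$: then $u=0$ on $(0,T)\times\Gamma$ and the Neumann trace $\partial_\nu u-p\nu=g-g=0$ vanishes there as well, so $(u,p)$ carries zero Cauchy data on $(0,T)\times\Gamma\subseteq(0,T)\times\Gamma_0$. Unique continuation forces $u\equiv 0$ in $(0,T)\times\Omega$ and $p$ spatially constant; subtracting the two $\Gamma_{out}$ boundary conditions and writing $\bar u:=u_1=u_2$ collapses them to $(q_1-q_2)\,\bar u=p\,\nu$ on $(0,T)\times\Gamma_{out}$. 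Projecting onto the tangent plane, on which $\nu$ has no component, gives $(q_1-q_2)\,\bar u_\tau=0$, so $q_1=q_2$ wherever the tangential velocity $\bar u_\tau$ does not vanish. Injectivity of $DT(q)$ runs in parallel: $w\vert_\Gamma=0$ again produces zero Cauchy data, hence $w\equiv 0$ and $\pi$ spatially constant, and the $\Gamma_{out}$ condition reduces to $h\,u=\pi\,\nu$, whose tangential projection $h\,u_\tau=0$ forces $h=0$ where $u_\tau\neq 0$.

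The main obstacle is the final step in each case: promoting $(q_1-q_2)\,\bar u_\tau=0$ and $h\,u_\tau=0$ to the genuine vanishing of $q_1-q_2$, respectively of $h$, on all of $(0,T)\times\Gamma_{out}$. This demands quantitative control of the zero set of the tangential velocity on the boundary, namely that it is too small to carry a nonzero element of the finite-dimensional space $V_M$ of functions continuous on each $\Gamma_i$. Here the hypotheses that $g(t)$ is not identically zero for every $t$ and that $q\ge m>0$ must be combined with the structure of $V_M$ and with the unique continuation property, whose validity for the time-dependent Stokes system is itself the crucial analytic input and the technically heaviest part of the argument.
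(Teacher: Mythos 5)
Your overall strategy coincides with the paper's: apply Bourgeois' abstract Theorem~\ref{abstract_th} to the map $q \mapsto u\vert_{(0,T)\times\Gamma}$, and your sketch of the $\mathcal{C}^1$ property (direct difference estimates for the Stokes system, with the linearized problem you write down) matches the paper's Step~2. The genuine gap is in the two injectivity steps, which you yourself flag as ``the main obstacle'' and leave unresolved. The missing idea is that unique continuation gives strictly more than you extract from it: after Fabre--Lebeau forces $u := u_1-u_2 \equiv 0$ in $(0,T)\times\Omega$ and $p := p_1-p_2$ spatially constant, you must return to the Neumann condition on $\Gamma$. Since $u\equiv 0$ implies $\partial_\nu u = 0$, the identity $\partial_\nu u - p\nu = 0$ on $(0,T)\times\Gamma$ forces $p=0$ on $\Gamma$, hence $p\equiv 0$ everywhere (it is spatially constant). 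The relation on $(0,T)\times\Gamma_{out}$ is therefore not $(q_1-q_2)\,\bar u = p\,\nu$ but simply $(q_1-q_2)\,\bar u = 0$ --- the full vector equation, not merely its tangential projection. By discarding the normal component and keeping an undetermined constant $p$, you created the obstacle (controlling the zero set of $\bar u_\tau$) that the correct argument never meets.

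With this correction, the conclusion requires no ``quantitative control of the zero set of the tangential velocity'' and no property of $V_M$: one argues by contradiction as in the paper's Proposition~\ref{unicite}, following \cite{us}. If $q_1\neq q_2$ somewhere, then by continuity ($q_k \in \mathcal{C}^0(0,T;\mathcal{C}^0_{pc}(\Gamma_{out}))$) we have $q_1-q_2\neq 0$ on an open set $O\subseteq (0,T)\times\Gamma_i$, whence $\bar u = 0$ on $O$; the Robin condition then shows that $(\bar u, p_1)$ has vanishing Cauchy data on $O$, so a second application of unique continuation yields $\bar u \equiv 0$ in $(0,T)\times\Omega$ and $p_1$ spatially constant; the Robin condition on $\Gamma_{out}$ then gives $p_1\equiv 0$, and finally $g = \partial_\nu \bar u - p_1\nu = 0$ on $(0,T)\times\Gamma_0$, contradicting the hypothesis that $g(t)$ is not identically zero for all $t$. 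The same two observations repair your argument for the derivative: the relation becomes $h\,u = 0$ on $(0,T)\times\Gamma_{out}$ (not $h\,u = \pi\,\nu$), and the identical contradiction argument gives $h=0$. Note finally that your suggestion that ``the structure of $V_M$'' must intervene at this stage is a misdirection: identifiability and injectivity of $dT_q$ hold for arbitrary coefficients in $\mathcal{C}^0(0,T;\mathcal{C}^0_{pc}(\Gamma_{out}))$; the finite dimension of $V_M$ and the compactness and convexity of $K_M$ are used only inside Theorem~\ref{abstract_th} itself.
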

The proof of Theorem \ref{main_result} is based on an abstract theorem proved by L. Bourgeois in~\cite{boubou} that we recall in Theorem \ref{abstract_th} and rely on the fact that the application which,
to a Robin coefficient maps the velocity on $ (0, T) \times \Gamma $ with $ \Gamma \subseteq \Gamma_0 $ is injective, of class $ \mathcal{C}^1 $ and its derivative is also injective. The result stated in \cite{boubou} establishs Lipschitz stability estimates for a class of inverse problems. For instance, it allows to find again the stability results developed in \cite{sincich} and \cite{alessandrini_et_al} without resorting to quantification of unique continuation results. The author points out that one can find the original idea developed in \cite{conca}  in the particular case of the detection of a moving obstacle in a fluid from measurements available on the boundary of the domain. Moreover,  abstract theorems of the same type but with different assumptions can be found in \cite{stefanov}.
The sequel of this paper is organized as follows. We present in Section~\ref{pre} some preliminary results which will be useful to prove Theorem~\ref{main_result}. Then the proof of Theorem~\ref{main_result} is given in Section~\ref{lip}.
\section{Preliminary results}
\label{pre}
In the section, we state results which will be useful in the proof of Theorem~\ref{main_result}. We begin by stating regularity result for a slightly more general Stokes system than system~\eqref{eqStokes}   (we add non homogeneous Robin boundary condition on $(0,T) \times \Gamma_{out}$):
\begin{equation} \label{eqStokes2}
	\left \{   
		\begin{array}{ccll}
                  \partial_t u    - \Delta u + \nabla p							&	=	&	0,							& \textrm{ in } (0,T)\times \Omega, \\ 
                                          div \textrm{ } u 							&	=	&	0,							& \textrm{ in } (0,T)\times \Omega, \\  
                                          u										&	=	&	0,							& \textrm{ in } (0,T)\times\Gamma_l, \\
                         \displaystyle       \partial_{\nu} u- p \nu			&	=	&	g, 							& \textrm{ on  } (0,T)\times \Gamma_0, \\   
                         \displaystyle       \partial_{\nu} u- p \nu	+ qu 	&	=	&	\kappa,							& \textrm{ on  } (0,T)\times \Gamma_{out},  \\
                         						u(0,.)						&	=	&	u_0,							& \textrm{ in } \Omega.
                   \end{array}     
         \right. 
 \end{equation}

\begin{proposition} \label{reg}
Assume that $g \in H^1(0,T; L^2(\Gamma_0))$, $\kappa \in H^1(0,T ; L^2(\Gamma_{out}))$, $u_0 \in H^1(\Omega)$ be such that $div$ $u_0=0$ in $\Omega$ and $q \in  L^{\infty}_+((0,T)\times \Gamma_{out})$. Then,  system~\eqref{eqStokes2} has a unique solution which belongs to $L^2(0,T; H^1(\Omega))  \cap H^1(0,T; L^2(\Omega)) \times L^2(0,T; L^2(\Omega))$. Moreover, there exists $C>0$, independent of $q$,  such that the following inequality holds
\begin{equation*}
 \|u\|_{L^2(0,T; H^1(\Omega))} \leq C( \|u_0\|_{L^2(\Omega)} + \|g\|_{L^{2}(0,T;L^2(\Gamma_0))}+\|\kappa\|_{L^{2}(0,T;L^2(\Gamma_{out}))}).
\end{equation*}
\end{proposition}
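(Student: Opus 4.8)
The plan is to eliminate the pressure by passing to the divergence‑free energy space, to obtain existence, uniqueness and the stated estimate from the variational theory of time‑dependent coercive forms, and finally to reconstruct the pressure by a de Rham argument. First I would work in $V=\{v\in H^1(\Omega)^d : v|_{\Gamma_l}=0,\ \mathrm{div}\,v=0\}$, with pivot space $H$ the $L^2(\Omega)$-closure of $V$. Multiplying the momentum equation by $v\in V$ and integrating by parts, the pressure boundary term $\int_{\partial\Omega}p\nu\cdot v$ combines with $-\int_{\partial\Omega}\partial_\nu u\cdot v$; using the boundary conditions on $\Gamma_l$ (where $v=0$), on $\Gamma_0$ and on $\Gamma_{out}$ one is led to
\begin{equation*}
\langle\partial_t u,v\rangle + a(t;u,v)=\int_{\Gamma_0}g\cdot v+\int_{\Gamma_{out}}\kappa\cdot v,\qquad a(t;u,v):=\int_\Omega\nabla u:\nabla v+\int_{\Gamma_{out}}q(t)\,u\cdot v .
\end{equation*}
Since $\Gamma_l$ carries positive surface measure, a Poincaré inequality gives $\|\nabla v\|_{L^2}^2\geq c\|v\|_{H^1}^2$, and as $q\geq m>0$ the Robin term is nonnegative; hence $a(t;\cdot,\cdot)$ is bounded and coercive on $V$ \emph{uniformly in $t$}, with a coercivity constant that does not involve $q$, while $q\in L^\infty$ ensures measurability of $t\mapsto a(t;u,v)$.

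Existence and uniqueness of $u\in L^2(0,T;V)\cap C([0,T];H)$ with $\partial_t u\in L^2(0,T;V')$ then follow from the standard theory of parabolic problems governed by non‑autonomous coercive forms (Lions' theorem), which I would implement through a Faedo–Galerkin scheme in a Hilbert basis of $V$. For the quantitative estimate I would test with $v=u$ and integrate in time, obtaining
\begin{equation*}
\tfrac12\|u(t)\|_{L^2}^2+\int_0^t\|\nabla u\|_{L^2}^2+\int_0^t\!\!\int_{\Gamma_{out}}q|u|^2=\tfrac12\|u_0\|_{L^2}^2+\int_0^t\Big(\int_{\Gamma_0}g\cdot u+\int_{\Gamma_{out}}\kappa\cdot u\Big).
\end{equation*}
The crucial point is that the third term on the left is nonnegative and may simply be discarded: this is exactly what makes the final constant independent of $q$. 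The boundary terms on the right are controlled, by the trace inequality and Poincaré, by $C(\|g\|_{L^2(\Gamma_0)}+\|\kappa\|_{L^2(\Gamma_{out})})\|\nabla u\|_{L^2}$, and Young's inequality absorbs the gradient into the left‑hand side. Integrating in time and using Poincaré once more yields the displayed inequality; note that only the $L^2$‑in‑time norms of $g$ and $\kappa$ enter here, so the stronger hypothesis $g,\kappa\in H^1(0,T;L^2)$ is reserved for the time regularity below.

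To upgrade $u$ to $H^1(0,T;L^2(\Omega))$ I would use the extra regularity $g,\kappa\in H^1(0,T;L^2)$ together with $u_0\in H^1$; the natural route is maximal $L^2$‑regularity for the symmetric form $a(t;\cdot,\cdot)$, equivalently an analytic‑semigroup argument for the generator associated with $a$. Once $u$ is known, the pressure is recovered by de Rham's lemma: the residual functional $v\mapsto\langle\partial_t u,v\rangle+\int_\Omega\nabla u:\nabla v-\int_{\Gamma_0}g\cdot v-\int_{\Gamma_{out}}(\kappa-qu)\cdot v$ vanishes on divergence‑free fields and is therefore a gradient $\nabla p$, with $p\in L^2(0,T;L^2(\Omega))$ and the corresponding bound furnished by the inf‑sup (LBB) condition on the Lipschitz domain.

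The delicate step is precisely the time regularity $\partial_t u\in L^2(0,T;L^2)$. Testing the time‑differentiated equation (or difference quotients in time) with $\partial_t u$ produces the Robin contribution $\int_{\Gamma_{out}}(\partial_t q)|u|^2$, which cannot be controlled when $q$ is merely $L^\infty$ in time; moreover the Lipschitz regularity of $\partial\Omega$ rules out the $H^2$ elliptic estimate that would otherwise let one read $\partial_t u=\Delta u-\nabla p\in L^2$ directly. This is why the argument must lean on the \emph{symmetry} of $a(t;\cdot,\cdot)$ and on maximal‑regularity results for non‑autonomous symmetric forms, rather than on a naive energy computation, and it is where I expect the main technical effort to lie.
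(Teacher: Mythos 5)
Your variational framework, the Galerkin/Lions existence argument, the energy estimate in which the Robin term is discarded by positivity (which is indeed exactly what makes the constant independent of $q$), and the recovery of the pressure via de Rham/inf--sup are all sound, and they coincide with the route the paper itself points to: its ``proof'' is a one-line deferral to the appendix of \cite{us}, adapted to the nonhomogeneous Robin datum $\kappa$. The genuine gap is precisely where you locate it, namely the claim $\partial_t u \in L^2(0,T;L^2(\Omega))$. You propose to close it by invoking ``maximal $L^2$-regularity for non-autonomous symmetric forms'', but no such black box exists under the standing hypotheses: the form $a(t;\cdot,\cdot)$ depends on $t$ only through $q \in L^{\infty}_+((0,T)\times\Gamma_{out})$, so it is merely bounded and measurable in time, and maximal regularity in the pivot space for non-autonomous symmetric forms with so little time regularity is exactly Lions' open problem, for which counterexamples are known (Fackler constructed symmetric forms, even H\"older continuous in time, for which it fails). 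The ``equivalent analytic-semigroup argument'' you mention is not available either: the operator associated with $a(t;\cdot,\cdot)$ is time-dependent, and analytic semigroup theory yields maximal regularity only in the autonomous case. So, as written, the $H^1(0,T;L^2(\Omega))$ part of the statement is not proved by your argument.

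For comparison, the proof the paper relies on (appendix of \cite{us}, where the Robin coefficient is time-independent) obtains the time regularity by the elementary computation you set aside: one tests the Galerkin approximation with $\partial_t u$, writes the Robin contribution as $\tfrac12\tfrac{d}{dt}\int_{\Gamma_{out}} q\,|u|^2$ --- this is where time-independence (or at least differentiability in time) of $q$ enters --- and handles the boundary source terms by integrating by parts in time, which is exactly what the hypothesis $g,\kappa \in H^1(0,T;L^2)$ is for; note also that $u_0\in H^1(\Omega)$ is needed there to control the initial trace terms. If you want a proof covering a genuinely time-dependent $q$ that is only $L^\infty$, you must either impose some time regularity on $q$ (e.g.\ $W^{1,\infty}$ or bounded variation in time, for which positive maximal-regularity results for non-autonomous forms do exist) or exploit the specific structure of the perturbation (it acts only through a boundary term of lower order); neither appears in your write-up, so the step stands as a gap rather than as an alternative route.
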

\begin{proof}[Proof of Proposition~\ref{reg}]
The proof is mainly contained in the appendix of~\cite{us}. The main difference here is that we work with non homogeneous Robin boundary conditions  which leads to slight modifications.
\end{proof}
\begin{remarque}
Note that due to the mixed boundary conditions, the fact that $\partial_t u \in \L^2(0,T;L^2(\Omega))$ does not imply that $(u,p) \in L^2(0,T; H^2(\Omega))   \times L^2(0,T; H^1(\Omega))$
\end{remarque}
The following Proposition~\ref{unicite} concerns the identifiability of the inverse problem we are interested in.
\begin{proposition} \label{unicite}
Let $\Gamma \subseteq \Gamma_0$ be a nonempty open subset of the boundary, $u_0 \in H^1(\Omega)$ be such that $div$ $u_0=0$ in $\Omega$. Assume that  $g \in H^1(0,T;L^2(\Gamma_0))$ is such that $g(t)$ is not identically zero for all $t \in (0,T)$.  Let  $(u_k,p_k)$  be the weak solutions of system~\eqref{eqStokes} with $q=q_k \in \mathcal{C}^0(0,T;\mathcal{C}^0_{pc}(\Gamma_{out}))$ for $k=1,2$.
We assume that  $u_1=u_2$ on $(0,T) \times \Gamma$. Then $q_1=q_2$ on $ (0,T)\times \Gamma_{out}$.
\end{proposition}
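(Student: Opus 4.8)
The plan is to pass to the difference $(u,p):=(u_1-u_2,p_1-p_2)$ and to prove in turn that $u\equiv 0$, then $p\equiv 0$ on $(0,T)\times\Omega$, and only then to read off $q_1=q_2$ from the Robin condition on $\Gamma_{out}$. Since the flux $g$ and the initial datum $u_0$ are common to both problems, $(u,p)$ solves the Stokes equations of \eqref{eqStokes} with homogeneous right-hand side and zero initial datum, the coefficient entering only through the Robin datum $q_1u_1-q_2u_2$ on $(0,T)\times\Gamma_{out}$. Moreover $(u,p)$ carries vanishing Cauchy data on $(0,T)\times\Gamma$: by assumption $u=u_1-u_2=0$ on $(0,T)\times\Gamma$, while subtracting the boundary conditions on $\Gamma_0$ gives $\partial_\nu u-p\nu=g-g=0$ on $(0,T)\times\Gamma_0$, hence on $(0,T)\times\Gamma$. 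I would first record this reduction, with the regularity of $(u,p)$ supplied by Proposition~\ref{reg}.

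The core of the proof is a unique continuation step from this Cauchy data. I would exploit the structure of the system: taking the divergence of the momentum equation and using $\operatorname{div}u=0$ shows that $p(t,\cdot)$ is harmonic in $\Omega$ for a.e.\ $t$, so that $u$ solves the heat system $\partial_t u-\Delta u=-\nabla p$ coupled to a harmonic pressure. Extending $(u,p)$ by zero across $\Gamma$, which is licit precisely because the Cauchy data vanish and so no jump is created, produces a solution on a larger domain vanishing on a nonempty open set; classical elliptic unique continuation for the harmonic pressure together with parabolic unique continuation for the velocity then force $u\equiv 0$ and $\nabla p\equiv 0$ on $(0,T)\times\Omega$, i.e.\ $p=c(t)$ depends on time only. (Equivalently one may invoke directly a Fabre--Lebeau type unique continuation property for the Stokes system.) I would then fix the remaining constant: since $u\equiv 0$ in $\Omega$ its normal derivative vanishes on $\Gamma$, so $\partial_\nu u-p\nu=0$ on $(0,T)\times\Gamma$ yields $c(t)\nu=0$, whence $c\equiv 0$ and $p_1=p_2$ on $(0,T)\times\Omega$.

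With $u_1=u_2=:w$ and $p_1=p_2$ established, subtracting the two Robin conditions on $(0,T)\times\Gamma_{out}$ leaves $(q_1-q_2)\,w=0$ there. It remains to propagate this to $q_1=q_2$. Suppose $q_1(t_0,x_0)\neq q_2(t_0,x_0)$ for some $i$ and some $(t_0,x_0)\in(0,T)\times\Gamma_i$; by continuity of $q_1-q_2$ on $(0,T)\times\Gamma_i$ there is a relatively open set $J\times\omega\subseteq(0,T)\times\Gamma_i$ on which $q_1-q_2\neq 0$, hence on which $w=0$. Then on $J\times\omega$ the genuine solution $(w,p_1)$ satisfies $\partial_\nu w-p_1\nu=-q_1w=0$ together with $w=0$, i.e.\ it again carries vanishing Cauchy data on a boundary piece over a time interval. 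The same unique continuation as above gives $w\equiv 0$ and $\nabla p_1\equiv 0$ on $J'\times\Omega$ for some subinterval $J'$; but then the homogeneous Robin condition on the whole of $\Gamma_{out}$ forces the spatial constant of $p_1$ to vanish, and the condition on $\Gamma_0$ collapses to $g=\partial_\nu w-p_1\nu=0$ on $J'\times\Gamma_0$, contradicting the assumption that $g(t)$ is not identically zero. Hence $q_1=q_2$ on a dense subset of each $(0,T)\times\Gamma_i$, and by continuity on all of $(0,T)\times\Gamma_{out}$.

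The main obstacle is the unique continuation step, which must be carried out for the nonstationary Stokes system from Cauchy data supported only on a cylinder over a piece of the boundary and under mixed boundary conditions; handling the boundary (rather than interior) nature of the data and the coupling between $u$ and $p$ is the delicate point, whereas the reduction to the difference, the fixing of the pressure constant, and the final density argument are comparatively routine.
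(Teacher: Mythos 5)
Your proof is correct and takes essentially the same route as the paper: the paper's own proof simply invokes the Fabre--Lebeau unique continuation theorem for the Stokes system and refers to Corollary 3.2 and Proposition 3.3 of \cite{us} for exactly the argument you spell out (vanishing Cauchy data for the difference, unique continuation to conclude $u_1=u_2$ and $p_1=p_2$, then a contradiction with the assumption $g(t)\not\equiv 0$ when $q_1\neq q_2$ on some relatively open set of $(0,T)\times\Gamma_i$). The only difference is cosmetic: you detail the extension-by-zero and harmonic-pressure mechanism behind the unique continuation step, which the paper outsources to its references.
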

\begin{proof}[Proof of Proposition~\ref{unicite}]
The proof is based on the unique continuation result for the Stokes system proved  by C.~Fabre and G. Lebeau in~\cite{fabre}. Thanks to the previous proposition, $(u,p) \in L^2(0,T; H^1(\Omega))  \cap H^1(0,T; L^2(\Omega)) \times L^2(0,T; L^2(\Omega))$, which is enough regularity to prove a similar result to Corollary 3.2 in~\cite{us}. Then, we proceed exactly as in the proof of Proposition 3.3 in~\cite{us} where the proof is done in the particular case when the Robin coefficient does not depend on time, by arguing by contradiction.
\end{proof}
As announced previously, the proof of our Lipschitz stability estimate is based on a theorem proved by L. Bourgeois in~\cite{boubou} which establishs Lipschitz stability estimate in an abstract framework for parameters defined on some finite dimensional subspace of the set of the continuous functions. For the sake of completeness, we state this theorem below.
\begin{theoreme} \label{abstract_th}
Let $(V,\|$ $\|_V )$ and $(H,\|$ $\|_H)$ be two Banach spaces. Let $U$ be an open subset of $V$ and $V_M$ a finite dimensional subspace of $V$ of dimension $M$. Let $K_M$ be a compact and convex subset of $V_M \cap U$. We consider a mapping $T: U \to H$ which satisfies the following assumptions:
\begin{enumerate}
\item $T: V_M \cap U \to H$ is injective,
\item $T: U \to H$  is $\mathcal{C}^1$: $T$ is differentiable in the sense of Fr\'echet at any point $x \in U$, the Fr\'echet derivative being denoted $dT_x: V \to H $ and the mapping $x \in U \to dT_x \in \mathcal{L}(V,H)$ is continuous.
\item For all $x \in V_M \cap U$, the operator $dT_x: V_M \to H$ is injective.
\end{enumerate} 
Then, there exists $C>0$ such that 
$\forall x, y \in K_M, \|x-y\|_V \leq C \|T(x)-T(y)\|_H.
$
\end{theoreme}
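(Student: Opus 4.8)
The plan is to argue by contradiction, exploiting the compactness of $K_M$ together with the finite dimensionality of $V_M$ to extract convergent subsequences, and then to distinguish two cases depending on whether the limits of the two competing sequences coincide. So I would assume the conclusion fails: then for every $n \in \mathbb{N}^*$ there exist $x_n, y_n \in K_M$ (necessarily with $x_n \neq y_n$) such that $\|x_n - y_n\|_V > n \|T(x_n) - T(y_n)\|_H$. Setting $z_n = (x_n - y_n)/\|x_n - y_n\|_V$, the fact that $V_M$ is a subspace containing $x_n, y_n$ gives $z_n \in V_M$ with $\|z_n\|_V = 1$, and
\[
\frac{\|T(x_n) - T(y_n)\|_H}{\|x_n - y_n\|_V} < \frac{1}{n}.
\]
Using compactness of $K_M$ I extract subsequences (not relabelled) with $x_n \to x$, $y_n \to y$ in $V$, where $x, y \in K_M$; and since $V_M$ is finite dimensional its unit sphere is compact, so I may further assume $z_n \to z$ with $z \in V_M$ and $\|z\|_V = 1$.

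In the first case $x \neq y$. Continuity of $T$ yields $T(x_n) \to T(x)$ and $T(y_n) \to T(y)$, while $\|T(x_n) - T(y_n)\|_H \leq \frac{1}{n}\|x_n - y_n\|_V \leq \frac{1}{n}\,\mathrm{diam}(K_M) \to 0$; hence $T(x) = T(y)$. As $x, y \in K_M \subseteq V_M \cap U$ and $T$ is injective there by assumption~1, this forces $x = y$, a contradiction.

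The delicate case, which I expect to be the main obstacle, is $x = y$, where the zeroth-order argument collapses and the first-order information must be brought in. Here the convexity of $K_M$ is essential: the segment $[y_n, x_n]$ lies in $K_M \subseteq U$, so $dT$ is defined all along it and the fundamental theorem of calculus for Banach-space-valued maps (justified by assumption~2) gives
\[
\frac{T(x_n) - T(y_n)}{\|x_n - y_n\|_V} = \int_0^1 dT_{y_n + t(x_n - y_n)}(z_n)\, dt .
\]
The left-hand side tends to $0$ in $H$. For the right-hand side I would write, with $\xi_n(t) = y_n + t(x_n - y_n)$, the splitting $dT_{\xi_n(t)}(z_n) - dT_x(z) = dT_{\xi_n(t)}(z_n - z) + \bigl(dT_{\xi_n(t)} - dT_x\bigr)(z)$ and control each term: since $\xi_n(t) \to x$ uniformly in $t \in [0,1]$ (because $x_n, y_n \to x$), continuity of $x \mapsto dT_x$ keeps the operator norms $\|dT_{\xi_n(t)}\|_{\mathcal{L}(V,H)}$ bounded and makes $\|dT_{\xi_n(t)} - dT_x\|_{\mathcal{L}(V,H)} \to 0$ uniformly, while $\|z_n - z\|_V \to 0$. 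Passing to the limit then gives $dT_x(z) = 0$ with $z \in V_M$ and $\|z\|_V = 1$, contradicting the injectivity of $dT_x : V_M \to H$ (assumption~3, applicable since $x \in K_M \subseteq V_M \cap U$). Both cases being impossible, the desired constant $C$ must exist.
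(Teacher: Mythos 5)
Your proof is correct and complete. Note that the paper itself does not prove Theorem~\ref{abstract_th}: it is quoted verbatim from Bourgeois~\cite{boubou} ``for the sake of completeness,'' so there is no internal proof to compare against. Your argument --- contradiction via a sequence $\|x_n-y_n\|_V > n\|T(x_n)-T(y_n)\|_H$, extraction of limits $x,y$ from the compactness of $K_M$ and of $z$ from the compactness of the unit sphere of the finite-dimensional $V_M$, elimination of the case $x\neq y$ by assumption~1, and in the case $x=y$ the identity $\frac{T(x_n)-T(y_n)}{\|x_n-y_n\|_V}=\int_0^1 dT_{y_n+t(x_n-y_n)}(z_n)\,dt$ (legitimate because convexity of $K_M$ keeps the segment inside $U$), passed to the limit using the uniform operator-norm convergence $dT_{\xi_n(t)}\to dT_x$ to contradict assumption~3 --- is precisely the argument of the cited reference, with all the delicate points (uniform convergence of $\xi_n(t)$ to $x$, boundedness of the operator norms, validity of the Banach-valued fundamental theorem of calculus) handled properly.
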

%
%
\section{Proof of the main result }
\label{lip}
In this section, we establishes the proof of Theorem~\ref{main_result}.
The proof consists of applying the abstract Theorem~\ref{abstract_th} with  $V=L^{\infty}((0,T) \times \Gamma_{out})$, $H=L^2((0,T) \times\Gamma)$, $U=L^{\infty}_+((0,T)\times \Gamma_{out})$. 
We consider the operator
\begin{equation*}
\begin{array}{rcl}
T:  L^{\infty}_+((0,T)\times \Gamma_{out}) & \to &L^2((0,T)\times\Gamma) \\
  q & \to & u\vert_{\Gamma}
\end{array}
\end{equation*}
where $(u,p)$ is solution of system~\eqref{eqStokes}. We are going to prove that:
\begin{enumerate}
\item $T \vert_{ V_M \cap U}$ is injective,
\item $T$ is differentiable  at any point $q \in L^{\infty}_+((0,T)\times \Gamma_{out})$ and its Fr\'echet derivative is the operator 
\begin{equation*}
\begin{array}{rcl}
dT_q:  L^{\infty}((0,T)\times \Gamma_{out}) & \to &L^2((0,T)\times\Gamma) \\
  h & \to & v_h\vert_{\Gamma},
\end{array}
\end{equation*}
where $(v_h,\tau_h)$ is solution to 
\begin{equation} \label{def_diff}
	\left \{   
		\begin{array}{ccll}
                  \partial_t v_h    - \Delta v_h + \nabla \tau_h							&	=	&	0,							& \textrm{ in } (0,T)\times \Omega, \\ 
                                          div \textrm{ } v_h 									&	=	&	0,							& \textrm{ in } (0,T)\times \Omega, \\  
                                          v_h												&	=	&	0,							& \textrm{ in } (0,T)\times\Gamma_l, \\
                         \displaystyle      \partial_{\nu} v_h - \tau_h\nu				&	=	&	0, 							& \textrm{ on  } (0,T)\times \Gamma_0, \\   
                         \displaystyle      \partial_{\nu} v_h - \tau_h\nu	+ qv_h 		&	=	&	-hu,							& \textrm{ on  } (0,T)\times \Gamma_{out},  \\
                         						v_h(0,.)								&	=	&	0,							& \textrm{ in } \Omega,
                   \end{array}     
         \right. 
 \end{equation}
where $u$ is solution to system~\eqref{eqStokes}. Moreover, the mapping 
\begin{equation} \label{mapping_diff}
\begin{array}{rcl}
dT: L^{\infty}_+((0,T)\times \Gamma_{out})  & \to &\mathcal{L}( L^{\infty}_+((0,T)\times \Gamma_{out}) ,L^2((0,T)\times\Gamma) )\\
  q & \to & dT_q,
\end{array}
\end{equation}
 is continuous.
\item For all $q \in V_M \cap L^{\infty}_+((0,T)\times \Gamma_{out})$, the operator $dT_q: V_M \to L^2((0,T)\times\Gamma)$ is injective.
\end{enumerate} 
Step 1 is a direct consequence of Proposition~\ref{unicite}.
Let us prove step 2. Let $q,h \in L^{\infty}_+((0,T)\times \Gamma_{out})$ and $(u,p)$ (resp. $(u_h,p_h)$) be the weak solution of system~\eqref{eqStokes} associated to $q$ (resp. to $q=q+h$). We denote by $(w_h,\pi_h)=(u_h-u,p_h-p)$ which is solution of the following Stokes system: 
\begin{equation*}
	\left \{   
		\begin{array}{ccll}
                  \partial_t w_h    - \Delta w_h + \nabla \pi_h							&	=	&	0,							& \textrm{ in } (0,T)\times \Omega, \\ 
                                          div \textrm{ } w_h 							&	=	&	0,							& \textrm{ in } (0,T)\times \Omega, \\  
                                          w_h										&	=	&	0,							& \textrm{ in } (0,T)\times\Gamma_l, \\
                         \displaystyle      \partial_{\nu} w_h - \pi_h{\nu}			&	=	&	0, 							& \textrm{ on  } (0,T)\times \Gamma_0, \\   
                         \displaystyle      \partial_{\nu} w_h - \pi_h{\nu}	+ qw_h 	&	=	&	-hu_h,							& \textrm{ on  } (0,T)\times \Gamma_{out},  \\
                         						w_h(0,.)						&	=	&	0,							& \textrm{ in } \Omega.
                   \end{array}     
         \right. 
 \end{equation*}
Let $M_1>0$ be such that $\|u_0\|_{L^2(\Omega)} + \|g\|_{L^2(0,T; L^2(\Gamma_{0}))} \leq M_1$.
Then, thanks to Proposition~\ref{reg}, there exists $C>0$ such that
\begin{equation}Ê\label{what}
\|u_h-u\|_{L^2(0,T;H^1(\Omega))} \leq C \|h\|_{L^{\infty}((0,T)\times \Gamma_{out})} \|u_h\|_{L^2(0,T;H^1(\Omega))} \leq C(M_1)\|h\|_{L^{\infty}((0,T)\times \Gamma_{out})}. 
\end{equation}
Now, let us consider $(e_h,\rho_h)=(u_h-u-v_h,p_h-p-\tau_h)$, where $(v_h,\tau_h)$ is solution to system~\eqref{def_diff}. First, we readily check that the operator $h \in L^{\infty}((0,T)\times \Gamma_{out})  \to v_h\vert_{\Gamma} \in L^2((0,T)\times\Gamma)$ is  linear continuous. Secondly, $(e_h,\rho_h)$ solves the problem
\begin{equation*}
	\left \{   
		\begin{array}{ccll}
                  \partial_t e_h    - \Delta e_h + \nabla \rho_h							&	=	&	0,							& \textrm{ in } (0,T)\times \Omega, \\ 
                                          div \textrm{ } e_h 							&	=	&	0,							& \textrm{ in } (0,T)\times \Omega, \\  
                                          e_h										&	=	&	0,							& \textrm{ in } (0,T)\times\Gamma_l, \\
                         \displaystyle      \partial_{\nu} e_h - \rho_h\nu			&	=	&	0, 							& \textrm{ on  } (0,T)\times \Gamma_0, \\   
                         \displaystyle      \partial_{\nu} e_h - \rho_h\nu	+ qe_h 	&	=	&	-h(u_h-u),							& \textrm{ on  } (0,T)\times \Gamma_{out},  \\
                         						e_h(0,.)						&	=	&	0,							& \textrm{ in } \Omega,
                   \end{array}     
         \right. 
 \end{equation*}
which implies,  thanks to Proposition~\ref{reg} and inequality~\eqref{what},  that
\begin{equation*}
\|e_h\|_{L^2((0,T);H^1(\Omega))}  \leq C \|h\|_{L^{\infty}((0,T)\times \Gamma_{out})} \|u_h-u\|_{L^2(0,T;H^1(\Omega))} \leq C(M_1)\|h\|_{L^{\infty}((0,T)\times \Gamma_{out})}^2 , 
\end{equation*}
which proves that $T$ is Fr\'echet differentiable and $dT_q(h)=v_h \vert_{\Gamma}$. 
\par
Let us prove now the continuity of the mapping $dT$ defined in~\eqref{mapping_diff}. Let $(v_h,\tau_h)$ (resp. $(v_h^l,\tau_h^l)$) be the solution of system~\eqref{def_diff} associated to $q$ (resp. $q=q+l$) and where $(u,p)$ (resp. $(u,p)=(u_l,p_l)$) is the solution to system~\eqref{eqStokes} associated to $q$ (resp. $q=q+l$) . 
We have that $(v_h^l-v_h,p_h^l-p_h)$ is the solution to the following Stokes system:
\begin{equation*}
	\left \{   
		\begin{array}{ccll}
                  \partial_t (v_h^l-v_h)   - \Delta (v_h^l-v_h)+ \nabla (p_h^l-p_h)							&	=	&	0,							& \textrm{ in } (0,T)\times \Omega, \\ 
                                          div \textrm{ } (v_h^l-v_h)							&	=	&	0,							& \textrm{ in } (0,T)\times \Omega, \\  
                                          v_h^l-v_h										&	=	&	0,							& \textrm{ in } (0,T)\times\Gamma_l, \\
                         \displaystyle      \partial_{\nu} (v_h^l-v_h)- (p_h^l-p_h)\nu			&	=	&	0, 							& \textrm{ on  } (0,T)\times \Gamma_0, \\   
                         \displaystyle      \partial_{\nu} (v_h^l-v_h)- (p_h^l-p_h)\nu	+ q(v_h^l-v_h)	&	=	&	-lv_h^l-h(u_l-u),							& \textrm{ on  } (0,T)\times \Gamma_{out},  \\
                         						 (v_h^l-v_h)(0,.)						&	=	&	0,							& \textrm{ in } \Omega.
                   \end{array}     
         \right. 
 \end{equation*}
This implies, thanks to Proposition~\ref{reg},  
\begin{equation*}
\|v_h^l-v_h\|_{L^2(0,T;H^1(\Omega))} \leq C \|l\|_{L^{\infty}((0,T)\times \Gamma_{out})} \|v_h^l\|_{L^2(0,T;H^1(\Omega))} \\
+ C \|h\|_{L^{\infty}((0,T)\times \Gamma_{out})} \|u_l-u\|_{L^2(0,T;H^1(\Omega))}, 
\end{equation*}
which leads to, applying again Proposition~\ref{reg} and inequality~\eqref{what} with $h=l$:
\begin{equation*}
\|v_h^l-v_h\|_{L^2(0,T;H^1(\Omega))} \leq C(M_1) \|l\|_{L^{\infty}((0,T)\times \Gamma_{out})}\|h\|_{L^{\infty}((0,T)\times \Gamma_{out})}, 
\end{equation*}
where $C$ is uniform with respect to $h$ and $l$. Otherwise, we have proved that
\begin{equation*}
|||dT_{q+l}-dT_q|||  \leq C \|l\|_{L^{\infty}((0,T)\times \Gamma_{out})}, 
\end{equation*}
where $|||$ $|||$ denotes the operator norm. Thus the mapping $dT$ is continuous.
\par
It remains to prove Step 3. Let $q \in L^{\infty}_+((0,T)\times \Gamma_{out}) \cap V_M $. Assume that  $h \in V_M$ is such that $v_h\vert_{(0,T)\times \Gamma}=0$. Then, since $(v_h\vert_{(0,T)\times\Gamma}, (\partial_{\nu} v_h- \tau_h \nu)\vert_{(0,T)\times \Gamma})=(0,0)$, we obtain from unique continuation result that $v_h=0$ in $(0,T)\times\Omega$ and then $hu=0$ on $(0,T)\times\Gamma_{out}$. We conclude that $h=0$ by contradiction, exactly as for the injectivity of the mapping $T$ (see~\cite{us}).
\bibliography{biblio1.bib}
\bibliographystyle{plain}
\end{document}